\pgfplotsset{compat=1.15}
\newtheorem{thm}{Theorem}[section]
\newtheorem{lem}[thm]{Lemma}
\newtheorem{prop}[thm]{Proposition}
\newtheorem*{thm*}{Theorem}
\newtheorem*{cnj*}{Conjecture}
\theoremstyle{definition}
\newtheorem{rmk}[thm]{Remark}
\newtheorem{eg}[thm]{Example}
\newtheorem{dfn}[thm]{Definition}
\newtheorem*{conj*}{Conjecture}
\newcommand{\sO}{\mathscr{O}}
\DeclareMathOperator{\rk}{rk}
\newcommand{\p}{\mathbb P}
\title{ Saito criterion and its avatars }
\author[D. Faenzi]{Daniele Faenzi}
\address{Daniele Faenzi.
  Institut de Math{\'e}matiques de Bourgogne, UMR 5584 CNRS,
  Universit{\'e} de Bourgogne, 9 Avenue Alain
  Savary, BP 47870, 21078 Dijon Cedex, France}
\email{daniele.faenzi@u-bourgogne.fr}
\author[M. Jardim]{Marcos Jardim}
\address{Marcos Jardim. Universidade Estadual de Campinas (UNICAMP) \\ Instituto de Matemática, Estatística e Computação Científica (IMECC) \\ Departamento de Matem\'atica \\
Rua S\'ergio Buarque de Holanda, 651\\ 13083-859 Campinas-SP, Brazil}
\email{jardim@unicamp.br}
\author[J. Vallès]{Jean Vallès}
\address{Jean Vall\`es. Universit\'e de Pau et des Pays de l'Adour,
  LMAP-UMR CNRS 5142, 
  Avenue de l'Universit\'e - BP 1155 -
  64013 Pau Cedex, France}
 \email{jean.valles@univ-pau.fr}
\date{\today}
\thanks{
D.F. partially supported by FanoHK ANR-20-CE40-0023, SupToPhAG/EIPHI ANR-17-EURE-0002, Région Bourgogne-Franche-Comté. M.J. is supported by the CNPQ grant number 305601/2022-9 and the FAPESP Thematic Project number 2018/21391-1. All authors partially supported by Bridges ANR-21-CE40-0017 and CAPES/COFECUB project \emph{Moduli spaces in algebraic geometry and applications}, Capes reference number 88887.191919/2018-00, and ANR BRIDGES ANR-21-CE40-0017.
}
\keywords{Logarithmic module, freeness and local freeness. Complete intersection.}
\subjclass[2010]{AF404; 14J60; 14M10; 32S65}
\begin{document}
\sloppy

\maketitle

\begin{center}
    \textit{We dedicate this paper to Enrique Arrondo on his 60th birthday.}
\end{center} 

\medskip

\abstractname{:}{  Saito in \cite{saito:logarithmic} gave a nice and efficient criterion to determine whether the module of logarithmic derivation associated with a reduced divisor in a complex variety is free or not. The aim of this note is to propose a new proof of this criterion, in the affine space, in the projective space, and for multiderivations, based on straightforward observations concerning free and reflexive modules. This point of view also allows us to prove a generalized version of the Saito criterion that applies to derivation modules associated with several polynomials introduced in \cite{faenzi-jardim-valles}.
}

\section{Introduction}
Let $R=k[x_1,\ldots, x_n]$ be a graded commutative ring over a field $k$ of characteristic zero. The partial derivatives $\frac{\partial}{\partial x_i}$ are denoted $\partial_{i}$. The $R$-module $\mathrm{Der}_R$ of polynomial derivations is a free  rank $n$ module:
\[ \mathrm{Der}_R\simeq\oplus_{1\le i\le n}R\partial_{i}.\]

\smallskip

To a square-free polynomial $f\in R$,  one associates its module of tangent derivations (or module of tangent vector fields): 
$$\mathrm{Der}(f)=\{\delta \in \mathrm{Der}_R\,|\, \delta(f)\in (f)\},$$
where $(f)$ is the principal ideal generated by $f$.
The module 
 $\mathrm{Der}(f)$ is a rank $n$ reflexive  $R$-module (see \cite[Cor. 1.7]{saito:logarithmic}) that could also be defined by the canonical exact sequence 
\[
0 \longrightarrow \mathrm{Der}(f) \longrightarrow \mathrm{Der}(R)\simeq R^n\longrightarrow R/(f) 
\]
where the last map is 
\[
\mathrm{Der}(R)\simeq R^n\longrightarrow R/(f), \,\, \delta \mapsto \delta(f) \,\mathrm{mod} \,(f).
\]
\begin{rmk}
 When $V(f)$ is a normal crossing divisor (NCD for short) then $\mathrm{Der}(f) $ is locally free, i.e. the localization $\mathrm{Der}(f)_P$ is free over $R_P$ for every prime ideal $P$ of $R$  (see \cite{deligne:equations} for instance). However, the converse is not true.    
\end{rmk}

\begin{rmk} \label{rem12}
    The map $\mathrm{Der}(R)\simeq R^n\longrightarrow R/(f)$ is the composition 
    of $R^n \longrightarrow R$ sending $\delta$ on $\delta (f)$ and the quotient map $R\longrightarrow R/(f)$ sending a polynomial 
    on its reduction modulo $f$. The image of the first map is the Jacobian ideal $J\subset R$ generated by the partial derivatives 
    $\partial_{i} f$. Its reduction modulo $(f)$ is $\frac{J+(f)}{(f)}$ ; when $f$ is homogeneous, $J+(f)=J$. It defines a subscheme in $V(f)$ supported by the singular locus of $V(f)$. This locus lives in codimension greater than $1$ since $f$ is square-free. So the exact sequence 
    \[
0 \longrightarrow \mathrm{Der}(f) \stackrel{\Phi}\longrightarrow  R^n\longrightarrow \frac{J+(f)}{(f)} \longrightarrow 0
\]
shows that $\bigwedge^n \Phi=\mathrm{det}(\Phi)=f$ and $\bigwedge^{n-1} \Phi=J$.

\end{rmk}

\begin{rmk} \label{rmk:1.3}
    If $f$ is homogeneous then
    $$\delta_E:=\sum_i x_i\partial_{i} \in \mathrm{Der}(f). $$
    The derivation $\delta_E$ is known as Euler derivation.
    This induces a splitting $$ \mathrm{Der}(f)=R\delta_E\oplus \mathrm{Der}_0(f)
   \,\, \mathrm{where}\,\, \mathrm{Der}_0(f)=\{\delta \in \mathrm{Der}(R)\,|\, \delta(f)=0\};$$
   to be precise, if $f$ is homogeneous and $\delta \in \mathrm{Der}(f) $ then $\delta-\frac{1}{\mathrm{deg}(f)}\frac{\delta(f)}{f}\delta_E \in \mathrm{Der}_0(f).$
   The module $\mathrm{Der}_0(f)$ is the Syzygy module of $\nabla f=(\partial_1f, \ldots, \partial_n f)$, 
   \[
0 \longrightarrow \mathrm{Der}_0(f) \longrightarrow R^n\stackrel{\nabla f} \longrightarrow R
\]
\end{rmk}
\begin{rmk}
    $\mathrm{Der}(f) $ being reflexive, it must be locally free on the projective plane $\p^2$  (resp. free on the affine plane $\mathbb{A}^2$).
\end{rmk}

\begin{rmk}
    When $V(f) $ is an hyperplane arrangement in $\p^n$, that is 
 $V(f)=\cup_iH_i$ is a union of $N+1$ distinct hyperplanes in $\p^n$ then 
 $\mathcal{T}_f=\widetilde{\mathrm{Der}(f)}$ is the dual of a Steiner sheaf, more precisely it arises as the kernel of a matrix of linear forms that could be described by the linear relations between the $H_i$ (see \cite{dolgachev-kapranov:arrangements}, \cite{faenzi-valles:london}, \cite{arrondo:schwarzenberger}, \cite{ancona-ottaviani:steiner},  etc.):

 \[
0 \longrightarrow \mathcal{T}_f \longrightarrow \sO(-1)^{N-1}\longrightarrow \sO^{N-n-1}.
\]

\end{rmk}


As first pointed out by Saito in \cite{saito:logarithmic}, there are divisors $V(f)$ not NCD such that $\mathrm{Der}(f)$ is a free $R$-module (meaning that  $\mathrm{Der}(f)\simeq R^n$). In the same paper, Saito gave a determinantal characterization of freeness, commonly called the Saito criterion.  This note aims to prove again this criterion (see Theorem \ref{saito-classic}) with basic algebraic arguments to generalize it to nonreduced divisor (see Theorem \ref{saito-multi} about the module $\mathrm{Der}_{\mathbf{m}}(f_1^{m_1}\cdots f_r^{m_r} )$ of multiderivations) and overall to finite families of polynomials (see Theorem \ref{pluri-poly}  where we define a rank $n-s+1$ module $\mathrm{Der}(f_1,\ldots, f_s)$ of logarithmic derivations associated to $s$ reduced polynomials). All these avatars of the Saito criterion are direct consequences of an elementary statement about reflexive modules which are explained in the next section \ref{Algebraic prelude}.

\medskip

Except when it is specified, the polynomials considered in this text are not necessarily homogeneous.

\section{Algebraic prelude: key proposition}
\label{Algebraic prelude}
We begin with an elementary proposition concerning reflexive modules. This proposition will be the key argument in the next proof of the Saito criterion and its generalization. 
\begin{prop}\label{basic-determinant}
  Let $M$ be a rank  $s\le n$ reflexive $R$-module and $\Phi$ a monomorphism $M\hookrightarrow R^n$.
  Let $N=\oplus_{1\le i\le s} Re_i$ be a free sub-module of $M$ and $\iota: N \hookrightarrow M$ be the inclusion map.
  Then 
  $$\bigwedge^s (e_1,\ldots,e_s)=\bigwedge^s (\Phi \circ \iota) =\mathrm{det}(\iota)\bigwedge^s \Phi.$$
  In particular, the following statements are equivalent:
  \begin{enumerate}
      \item $M=N$ 
      \item $\det(\iota)\in k^*$ 
      \item $V(\bigwedge^s (\Phi \circ \iota))=V(\bigwedge^s \Phi)$
  \end{enumerate}
\end{prop}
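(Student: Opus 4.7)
The plan is to establish the formula $\bigwedge^s(\Phi\circ\iota)=\det(\iota)\cdot\bigwedge^s\Phi$ first, and then to derive the three equivalences from it.

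Since $N$ is free of rank $s$, $\bigwedge^s N$ is canonically generated by $e_1\wedge\cdots\wedge e_s$. Because $M$ is reflexive of rank $s$ over the polynomial ring $R$ (a UFD), its determinant $\det M:=(\bigwedge^s M)^{**}$ is a reflexive rank-one module, hence free; I would fix an identification $\det M\simeq R$ once and for all. The canonical composition $\bigwedge^s N\to\bigwedge^s M\to\det M$ then reads as multiplication by an element of $R$, which I denote $\det(\iota)$. Bi-dualising $\bigwedge^s(\Phi^*)$ similarly identifies $\bigwedge^s\Phi$ with a concrete element of $\bigwedge^s R^n\simeq R^{\binom{n}{s}}$, and functoriality of the exterior power yields the formula $\bigwedge^s(\Phi\circ\iota)=\det(\iota)\cdot\bigwedge^s\Phi$ inside $\bigwedge^s R^n$. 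Direct evaluation on the generator $e_1\wedge\cdots\wedge e_s$ shows that this element is also $\Phi(e_1)\wedge\cdots\wedge\Phi(e_s)$, which is the quantity the statement calls $\bigwedge^s(e_1,\ldots,e_s)$.

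I would next address the equivalences. The implication $(1)\Rightarrow(2)$ is trivial. For $(2)\Rightarrow(1)$, assume $\det(\iota)\in k^*$. At every height-one prime $\mathfrak{p}$ of $R$, $M_\mathfrak{p}$ is reflexive of rank $s$ over the DVR $R_\mathfrak{p}$, hence free of rank $s$; the map $\iota_\mathfrak{p}$ is then a morphism of free $R_\mathfrak{p}$-modules of equal rank whose determinant coincides with $\det(\iota)$ up to a unit, and is therefore invertible, so $\iota_\mathfrak{p}$ is an isomorphism. Viewing $N\subseteq M$ inside their common generic fibre and using that reflexive modules over a normal domain satisfy Serre's $S_2$ condition, I would conclude $M=\bigcap_\mathfrak{p}M_\mathfrak{p}=\bigcap_\mathfrak{p}N_\mathfrak{p}=N$, with intersections ranging over all height-one primes.

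The equivalence $(2)\Leftrightarrow(3)$ is formal: $(2)\Rightarrow(3)$ holds because scaling by a unit preserves the zero locus. For the converse, I would interpret $(3)$ as the equality of ideals $\det(\iota)\cdot(\bigwedge^s\Phi)=(\bigwedge^s\Phi)$ in $R$. The ideal $I:=(\bigwedge^s\Phi)$ is nonzero, since $\Phi$ has generic rank $s$ and hence some $s\times s$ minor is nonzero; if $\det(\iota)$ were not a unit, localising at any maximal ideal $\mathfrak{m}$ containing it would give $I_\mathfrak{m}=\mathfrak{m}I_\mathfrak{m}$, and Nakayama's lemma would force $I_\mathfrak{m}=0$, contradicting $I\ne0$ in the domain $R$. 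The main subtlety I anticipate is precisely the setup of $\bigwedge^s\Phi$ as a concrete element of $\bigwedge^s R^n$ when $M$ is not free, for which the reflexivity of $M$ together with the UFD hypothesis on $R$ is essential; once this identification is in place the equivalences drop out from a short Nakayama-plus-$S_2$ argument.
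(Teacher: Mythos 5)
Your derivation of the identity $\bigwedge^s(\Phi\circ\iota)=\det(\iota)\bigwedge^s\Phi$ is exactly the paper's argument: pass to $s$-th exterior powers, use that the rank-one reflexive modules $\bigwedge^s N^{\vee\vee}$ and $\bigwedge^s M^{\vee\vee}$ are free over the UFD $R$, and read off $\det(\iota)$ as a scalar. Where you genuinely go beyond the paper is in the three equivalences, which the paper dispatches with ``this proves our proposition'': your localization at height-one primes combined with the $S_2$ property of reflexive modules for $(2)\Rightarrow(1)$, and the Nakayama argument for $(3)\Rightarrow(2)$, are correct and supply the justification the paper leaves implicit. One point you handle well and that deserves emphasis: for $(3)$ one must read $V(\cdot)$ scheme-theoretically, i.e.\ as an equality of ideals, as you do. With the purely set-theoretic reading the equivalence fails: take $M=\mathrm{Der}(f)=\bigoplus_i R\delta_i$ free and $N$ spanned by $f\delta_1,\delta_2,\dots,\delta_n$, so that $\det(\iota)=f$ and $V(f^2)=V(f)$ as sets while $N\neq M$. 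This scheme-theoretic reading is also the one consistent with the later applications, where the criterion demands $\det(\delta_1,\dots,\delta_n)=uf$ with $u\in k^*$ on the nose.
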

\begin{proof}
This is a consequence of the commutative square:
   \begin{equation}
\begin{split} \xymatrix@-0ex{ 
&0 \ar[d] & & &\\
& N\ar@{=}[r]\ar[d]^{\iota} & N  \ar[d]^{(e_1,\ldots, e_s)} \\
0\ar[r] & M \ar[r]^{\Phi} & R^n 
} \end{split}
\end{equation} 
Indeed, $N$ and $M$ being reflexive modules of rank $s$, 
$\bigwedge^s N\simeq \bigwedge^s N^{\vee \vee}$ and
$\bigwedge^s M\simeq \bigwedge^s M^{\vee \vee}$ are reflexive modules of rank one that are free modules and 
there is a commutative square
 \begin{equation}
\begin{split} \xymatrix@-0ex{ 
&0 \ar[d] & & &\\
& \bigwedge^s N\ar@{=}[r]\ar[d]_{\det(\iota)} & \bigwedge^s N  \ar[d]^{\bigwedge^s (e_1,\ldots, e_s)} \\
0\ar[r] & \bigwedge^s M \ar[r]^-{\bigwedge^s \Phi} & \bigwedge^s R^n 
} \end{split}
\end{equation} 
showing that $$\bigwedge^s (e_1,\ldots, e_s)=\bigwedge^s (\Phi \circ \iota)= \mathrm{det}(\iota)\bigwedge^s \Phi.$$This proves our proposition.
Let us point out that if $s=n$ then $\det(\Phi)\;|\, \det(e_1\ldots, e_s).$
\end{proof}

\section{Free divisors: Saito criterion}
\begin{dfn}
  The module $\mathrm{Der}(f)$ is free  of rank $n$ if and only if there exist $n$ derivations
$$\delta_i=\sum_{1\le j\le n}P_{ij}\partial_{j} \in \mathrm{Der}(f)$$ such that 
$$ \mathrm{Der}(f)=\oplus_i R\delta_i.$$  
\end{dfn}

Let us first point out the following crucial fact (see \cite[1.5]{saito:logarithmic}):
\begin{lem}\label{pre-rem}
Let
$\delta_i\in \mathrm{Der}(f)$ be  $n$ independent derivations tangent to $V(f)$. Then, 
  $$f\, | \,\mathrm{det}(\delta_1,\ldots, \delta_n):=\mathrm{det}(P_{i,j})_{1\le i,j\le n}.$$ 
\end{lem}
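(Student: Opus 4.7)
The plan is to deduce this lemma as an immediate application of the key Proposition \ref{basic-determinant} with $s=n$. I take $M=\mathrm{Der}(f)$, which is reflexive of rank $n$ as recalled in the introduction, and let $\Phi \colon \mathrm{Der}(f)\hookrightarrow \mathrm{Der}(R)\simeq R^n$ be the canonical inclusion defined right after Remark \ref{rem12}. Remark \ref{rem12} has already computed the top exterior power of this embedding: $\bigwedge^n \Phi = \det(\Phi) = f$, up to a unit.

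Next, I set $N=\oplus_{1\le i\le n}R\delta_i$. The assumption that $\delta_1,\ldots,\delta_n$ are $R$-linearly independent is exactly what is needed for the sum to be direct and for $N$ to be a free $R$-module of rank $n$ sitting inside $M$ via an inclusion $\iota$. So I am in the situation of Proposition \ref{basic-determinant}.

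Applying that proposition, I obtain
$$\bigwedge^n(\delta_1,\ldots,\delta_n)\;=\;\bigwedge^n(\Phi\circ\iota)\;=\;\det(\iota)\cdot\bigwedge^n \Phi\;=\;\det(\iota)\cdot f.$$
It remains only to identify the left-hand side with $\det(P_{ij})$: this is immediate, since in the basis $(\partial_1,\ldots,\partial_n)$ of $R^n$ the composite $\Phi\circ\iota$ is represented exactly by the matrix $(P_{ij})$, so its $n$-th exterior power is its determinant. Therefore $\det(P_{ij}) = \det(\iota)\cdot f$, which gives $f\mid \det(P_{ij})$.

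I do not anticipate a real obstacle: once the key proposition is in place the lemma falls out formally, and the only tiny point to keep in mind is to interpret the hypothesis of ``independence'' as $R$-linear independence (equivalently, that $\det(P_{ij})\neq 0$ in $R$), which ensures $N$ is actually free of rank $n$. If $\delta_1,\ldots,\delta_n$ were merely $k$-linearly independent the statement would be vacuous on the nose, so this reading is the natural one.
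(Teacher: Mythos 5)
Your proof is correct and follows exactly the same route as the paper: apply Proposition \ref{basic-determinant} with $M=\mathrm{Der}(f)$, $N=\oplus R\delta_i$, and the inclusion $\Phi$ into $R^n$, using Remark \ref{rem12} to identify $\det(\Phi)=f$. Your added clarification that ``independence'' must mean $R$-linear independence (so that $N$ is genuinely free of rank $n$) is a reasonable precision that the paper leaves implicit.
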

\begin{proof}
Since $\delta_i$ are  $n$ independent derivations tangent to $V(f)$ the $R$-module  $\oplus R\delta_i$ is a free sub-module of rank $n$ of 
$\mathrm{Der}(f)$. Calling $N=\oplus R\delta_i$, $M=\mathrm{Der}(f)$ and $\Phi$ the inclusion $\mathrm{Der}(f)\hookrightarrow \mathrm{Der}(R)\simeq R^n$ we have, according to Remark \ref{rem12}, $\det(\Phi)=f$ and according to Proposition \ref{basic-determinant},
\[ \det(\Phi)=f\;|\, \mathrm{det}(\delta_1,\ldots, \delta_n)
\]
\end{proof}
As a direct consequence, we get the classical Saito criterion (see \cite[Theorem (ii), page 270]{saito:logarithmic}):
\begin{thm}
\label{saito-classic}
$ \mathrm{Der}(f)=\oplus_i R\delta_i$ if and only if 
$\mathrm{det}(\delta_1,\ldots, \delta_n)=uf, \,\, u\in k^*.$    
\end{thm}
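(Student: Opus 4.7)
The plan is to deduce the theorem as a direct application of Proposition~\ref{basic-determinant} to the setting $M = \mathrm{Der}(f)$ with $s=n$, taking $\Phi \colon \mathrm{Der}(f)\hookrightarrow R^n$ to be the canonical inclusion and $N = \bigoplus_{i=1}^n R\delta_i$ with its inclusion $\iota \colon N\hookrightarrow M$. Since $\mathrm{Der}(f)$ is reflexive of rank $n$ by Saito's cited result, and by Remark~\ref{rem12} we have $\bigwedge^n\Phi = \det(\Phi) = f$, everything needed to invoke the proposition is already in place; moreover, by construction, $\bigwedge^n(\Phi\circ\iota) = \det(\delta_1,\ldots,\delta_n)$ when the $\delta_i$ are read off as the rows (or columns) of the matrix $(P_{ij})$.

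For the forward implication, I would assume $\mathrm{Der}(f) = \bigoplus_i R\delta_i$, i.e., $M = N$ in the notation of the proposition. Then the equivalence (1)$\Leftrightarrow$(2) yields $\det(\iota) \in k^*$, and the identity $\bigwedge^n(\Phi\circ\iota) = \det(\iota)\cdot\bigwedge^n\Phi$ from the proposition then gives $\det(\delta_1,\ldots,\delta_n) = u\,f$ with $u := \det(\iota) \in k^*$.

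For the reverse implication, assume $\det(\delta_1,\ldots,\delta_n) = uf$ with $u \in k^*$. Since $f\neq 0$, the determinant is nonzero, which forces the $\delta_i$ to be $R$-linearly independent, and hence $N = \bigoplus_i R\delta_i$ is genuinely a free submodule of $\mathrm{Der}(f)$ of rank $n$, so that Proposition~\ref{basic-determinant} applies. The hypothesis gives $V(\bigwedge^n(\Phi\circ\iota)) = V(uf) = V(f) = V(\bigwedge^n\Phi)$, and the equivalence (3)$\Leftrightarrow$(1) then immediately delivers $\mathrm{Der}(f) = \bigoplus_i R\delta_i$.

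No step looks like a genuine obstacle: the whole argument is a repackaging of Proposition~\ref{basic-determinant} applied to the exact setup of Remark~\ref{rem12}, which is also the strategy that produced Lemma~\ref{pre-rem}. The only point that requires a moment's care is checking, in the converse direction, that a free submodule of the correct rank is actually available — this is precisely where the assumption $u \in k^*$ (so in particular $\det\neq 0$) enters, ensuring $R$-linear independence of the $\delta_i$ and hence the freeness of $N$ of rank $n$.
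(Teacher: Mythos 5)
Your proposal is correct and follows essentially the same route as the paper's first proof: both reduce the theorem to Proposition~\ref{basic-determinant} applied to $N=\bigoplus_i R\delta_i \hookrightarrow M=\mathrm{Der}(f)\hookrightarrow R^n$, using $\det(\Phi)=f$ from Remark~\ref{rem12} and the factorization $\det(\delta_1,\ldots,\delta_n)=\det(\iota)\cdot f$. You merely spell out the two directions and the independence check more explicitly than the paper does.
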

\subsection{First proof of Theorem \ref{saito-classic}}
\begin{proof}
   Denoting $N=\oplus R\delta_i$ and $M=\mathrm{Der}(f)$ we have, according to Proposition \ref{basic-determinant}, $M=N$ if and only if
   $\det(\iota)=u\in k^*.$
\end{proof}

\subsection{Second proof of Theorem \ref{saito-classic}}
\begin{proof}

Let  $U=\oplus_{j=1,\ldots, n}R\delta_j$ be a free sub-module
of $\mathrm{Der}(f)$. We get a commutative diagram:
\begin{equation}
\begin{split} \xymatrix@-2ex{ 
&0 \ar[d] & & &\\
0\ar[r] & U\ar[r]\ar[d] & R^n\ar[r]\ar@{=}[d] & K \ar[d]\ar[r] & 0 \\
0\ar[r] & \mathrm{Der}(f)\ar[r]\ar[d] & R^n \ar[r] & \frac{J+(f)}{(f)} \ar[r]\ar[d] & 0 \\
& \mathrm{Der}(f)/U \ar[d]  &  & 0& \\
& 0 &  & &
} \end{split}
\end{equation}
where the singular locus of the cokernel $K$ is defined by $V(\bigwedge^{n} U)=V(\mathrm{det}(\delta_i)).$

\smallskip

Since $ \mathrm{Der}(f)$ is reflexive, $U$ is free and 
$\mathrm{rk}(\mathrm{Der}(f))=\mathrm{rk}(U)$,  then $\mathrm{Der}(f)/U=0 $ when
$\mathrm{Der}(f)=U$ 
or  $\mathrm{Der}(f)/U $ is a torsion sheaf when $\mathrm{Der}(f)\neq U .$   

\smallskip

--- If   $U=\mathrm{Der}(f)$ then it is is free and 
$K=\frac{J+(f)}{(f)}$ by the snake lemma. This proves that the singular locus $V(\bigwedge^{n} U)$ of $K$ is exactly $V(f)$. Actually, this proves more. It proves that the Jacobian ideal $J$  coincides with the first Fitting ideal defined by $V(\bigwedge ^{n-1}U)$.

\smallskip

--- If    $\mathrm{Der}(f)\neq U$ then $ \mathrm{Der}(f)/U$ is a torsion sheaf. By the snake lemma, we get
\[
0 \longrightarrow \mathrm{Der}(f)/U \longrightarrow K \longrightarrow \frac{J+(f)}{(f)} \longrightarrow 0
\]
proving that the singular locus of $K$ defined by $V(\mathrm{det}(\delta_i))$ differs from $V(f)$.

\end{proof}

\section{Examples of applications}
We give here two kinds of applications, some in the homogeneous (or projective) case and some in the non-homogeneous (or affine) case. The first case is, in some sense, easier since the degree of the derivations are or are not the good ones, separating the free from the non-free divisors. In the affine case, the criterion appears to be stronger since the degrees are not enough to distinguish which set of derivations is a basis or not.

\begin{eg}
 The arrangement of $3n+3$ lines (the reflection arrangement denoted by $\mathcal{A}_3^3(n)$ in 
 \cite{orlik-terao:arrangements}) on $\p^2$
 defined by the equation  $$f(x,y,z):=xyz(x^n-y^n)(x^n-z^n)(y^n-z^n)=0$$
 is free with exponents $(n+1,2n+1)$. Indeed the derivations $$\delta_1=x^{n+1}\partial_x+y^{n+1}\partial_y+z^{n+1}\partial_z \;\mathrm{and}\; \delta_2=x^{2n+1}\partial_x+y^{2n+1}\partial_y+z^{2n+1}\partial_z$$
 belong to $\mathrm{Der}(f)$ and

\[
\mathrm{det}\left[
    \begin{array}{ccc}
    x&x^{n+1}&x^{2n+1}\\
    y&y^{n+1} & y^{2n+1}\\
    z&z^{n+1}& z^{2n+1}
     \end{array}
       \right]=xyz\mathrm{det}\left[
    \begin{array}{ccc}
    1&x^{n}&x^{2n}\\
    1&y^{n} & y^{2n}\\
    1&z^{n}& z^{2n}
     \end{array}
       \right]=xyz(x^n-y^n)(x^n-z^n)(y^n-z^n).
\]
\end{eg}

Let us give two examples now of smooth conics in the affine plane having a basis consisting of polynomials of degree 2 for the first one and of degree one for the other.       
\begin{eg}
    This exemple is due to Yohan Genzmer. 
    He shows that even if $\mathrm{Der}(f)$ contains a derivation of minimal degree this derivation is not necessarily a generator of  $\mathrm{Der}(f)$. Let $f(x,y)=x^2+y^2+x.$ Then the derivation $2y\partial_x-(2x+1)\partial_y \in \mathrm{Der}(f)$ in particular it belongs to $\mathrm{Der}_0(f)$ but a basis of $\mathrm{Der}(f)$ is given by 
\[
\left\{
    \begin{array}{ccc}
    \delta_1&=&2y(2x+1)\partial_x+(4y^2-1)\partial_y\\
    \delta_2&=&x(x+1)\partial_x+(\frac{y}{2}+yx)\partial_y
     \end{array}
       \right.
\]
 Indeed, note that $\delta_1(f)=8yf$, $\delta_2(f)=(2x+1)f$ so that $\mathrm{det}(\delta_1,\delta_2)=f.$
In addition, we observe that the curve $V(f)\subset \mathbb{A}_2$ is free, but its homogenization $V(x^2+y^2+xz)\subset \p^2$, which is a smooth conic, is not free. 
\end{eg}

\begin{eg}
The curve $C\cup L=\{z(x^2+yz)=0\}$ union of a smooth conic and one tangent line in $\p^2$ is free with exponents $(1,1)$. 
    Indeed two derivations in $\mathrm{Der}_0(C\cup L)$  are 
    $z\partial_x-2x\partial_y$ and $x\partial_x+4y\partial_y-2z\partial_z.$ and 
    $$\mathrm{det} 
\begin{pmatrix}  x & z  &x  \\
y & -2x  &4y \\
  z & 0&-2z
   \end{pmatrix}= 6z(x^2+yz).$$
    Removing the line $L=\{z=0\}$ we get

  \[
0 \longrightarrow \mathcal{T}_{C\cup L}=\sO_{\p^2}(-1)^2\longrightarrow \mathcal{T}_{C} \longrightarrow \sO_L(\alpha) \longrightarrow 0,
\]
for some integer $\alpha$.
Then on the affine space $\p^2\setminus L$, the curve becomes $\{y+x^2=0\} $ which is free with two derivations of degree 1: 
\[
\left\{
    \begin{array}{ccc}
    \delta_1&=&\partial_x-2x\partial_y\\
    \delta_2&=&\frac{x}{2}\partial_x+y\partial_y
     \end{array}
       \right.
\]
verifying $\det(\delta_1,\delta_2)=y+x^2.$
\end{eg}

\section{Multiderivations}

The previous material can be extended to non-reduced divisors and their associated modules of multiderivations. Indeed a multiple polynomial $f=f_1^{m_1}\cdots f_r^{m_r}$, where $f_i$ are square-free and irreducible polynomials of degree $r_i\ge 1$,
being given one can define the following sub-module of $\mathrm{Der}(R)$ (see \cite{ziegler:multi}):
$$ \mathrm{Der}_{\mathbf{m}}(f):=\{\delta \in \mathrm{Der}_R\,|\, \delta(f_i)\in (f_i^{m_i}) \,\, 1\le i\le r  \}.$$
This $R$ module which is also reflexive, can be defined  by the canonical exact sequence 
\[
0 \longrightarrow \mathrm{Der}_{\mathbf{m}}(f) \longrightarrow \mathrm{Der}(R)\simeq R^n\longrightarrow \bigoplus_{i=1}^r R/(f_i^{m_i}) 
\]
where the last map is given by 
\[
\delta \mapsto \Big( \delta(f_1) \,\mathrm{mod} \,(f_1^{m_1})\, , \dots, \,\delta(f_r) \,\mathrm{mod} \,(f_r^{m_r}) \Big) .
\] 
In particular, this shows that $\rk(\mathrm{Der}_{\mathbf{m}}(f))=n$. 
Moreover, there is a natural inclusion 
$$\mathrm{Der}_{\mathbf{m}}(f)\subset \mathrm{Der}(\prod_i f_i).$$
 Let us denote by $\Phi$ the first map in the exact sequence above, that is the inclusion map: $\mathrm{Der}_{\mathbf{m}}(f) \longrightarrow \mathrm{Der}(R)\simeq R^n.$
 Like in the reduced case we have an exact sequence 
\[
0 \longrightarrow \mathrm{Der}_{\mathbf{m}}(f) \stackrel{\Phi} \longrightarrow  R^n\longrightarrow \mathrm{coker}(\Phi) \longrightarrow 0,
\]
where the support of the module $\mathrm{coker}(\Phi)$ is $V(\mathrm{det}(\Phi))$.\\
Let us verify that  $\mathrm{det}(\Phi)=f=f_1^{m_1}\cdots f_r^{m_r}$. Since both modules have the same rank then $V(\mathrm{det}(\Phi))$ is empty and $\Phi$ is an isomorphism or $V(\mathrm{det}(\Phi))$ defines a divisor. Of course a general derivation  does  not belong to $\mathrm{Der}_{\mathbf{m}}(f)$ which proves that  $V(\mathrm{det}(\Phi))\neq \emptyset.$ Multiplying $f$ by $f_i$ we obtain a natural exact sequence: 
\[
0 \longrightarrow \mathrm{Der}_{\mathbf{m'}}(f) \longrightarrow  \mathrm{Der}_{\mathbf{m}}(f)  \longrightarrow R/(f_i),
\]
where $m'=(m_1,\ldots, m_i+1,\ldots, m_n)$, the first map is the inclusion and the second map sends a multiderivation 
$\delta$ to $\frac{\delta(f_i)}{f_i^{m_i}} \mod \, (f_i).$ The commutative square 
$$\begin{CD}
@.  0 \\
@. @VVV  \\
 0@>>>  \mathrm{Der}_{\mathbf{m'}}(f) @>{\Phi'}>> R^n \\
@.  @VV{\iota}V @| \\
0@>>>   \mathrm{Der}_{\mathbf{m}}(f) @>{\Phi}>> R^n 
\end{CD}$$
 gives $\mathrm{det}(\Phi')=\mathrm{det}(\iota)\mathrm{det}(\Phi)=f_i\mathrm{det}(\Phi)$ proving the statement by induction.
Saito's criterion is still pertinent for this module, as this is proved in \cite[Theorem and Definition 8]{ziegler:multi}:
\begin{thm}
\label{saito-multi}
$  \mathrm{Der}_{\mathbf{m}}(f) =\oplus_i R\delta_i$ if and only if 
$\mathrm{det}[\delta_1,\ldots, \delta_n]=uf, \,\, u\in k^*.$    
\end{thm}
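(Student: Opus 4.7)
The plan is to mimic the first proof of Theorem \ref{saito-classic}, replacing the reduced divisor by its multi-version and exploiting the determinantal identity $\det(\Phi)=f_1^{m_1}\cdots f_r^{m_r}$ that was established immediately before the statement.

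First I would set $M=\mathrm{Der}_{\mathbf{m}}(f)$ and $N=\oplus_{i=1}^n R\delta_i$. Because the $\delta_i$ are assumed to lie in $M$ and to be $n$ independent derivations, $N$ is a free sub-module of $M$ of rank $n=\rk(M)$; moreover $M$ is reflexive by construction, so the hypotheses of Proposition \ref{basic-determinant} are met with $s=n$.

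Next I would read off both sides of the identity supplied by Proposition \ref{basic-determinant}. The composition $\Phi\circ\iota\colon N\hookrightarrow M\hookrightarrow R^n$ sends the standard basis $(e_1,\dots,e_n)$ of $N$ to the coefficient vectors of $\delta_1,\dots,\delta_n$ viewed in $\mathrm{Der}(R)\simeq R^n$, so that
\[
\bigwedge^n(\Phi\circ\iota)=\det[\delta_1,\dots,\delta_n].
\]
Combining this with the computation $\det(\Phi)=f$ carried out just above the theorem statement, Proposition \ref{basic-determinant} yields
\[
\det[\delta_1,\dots,\delta_n]=\det(\iota)\cdot f .
\]

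Finally, the equivalence drops out from clause (2) of Proposition \ref{basic-determinant}: $M=N$ holds if and only if $\det(\iota)\in k^*$, which by the displayed identity is the same as $\det[\delta_1,\dots,\delta_n]=uf$ for some $u\in k^*$. The only serious obstacle in this argument is the computation $\det(\Phi)=f_1^{m_1}\cdots f_r^{m_r}$ for the non-reduced setting, but that has already been handled by the induction on $\sum m_i$ that precedes the statement, so no new work is needed on that front and the proof is a direct transcription of the reduced case.
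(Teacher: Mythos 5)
Your proof is correct and follows exactly the paper's own argument: both apply Proposition \ref{basic-determinant} with $M=\mathrm{Der}_{\mathbf{m}}(f)$ and $N=\oplus_i R\delta_i$, combined with the identity $\det(\Phi)=f$ established by the induction just before the theorem. Your write-up merely spells out the intermediate steps that the paper leaves implicit.
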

\begin{proof}
Denoting $N=\oplus R\delta_i$ and $M=\mathrm{Der}_{\mathbf{m}}(f) $ we have, according to Proposition \ref{basic-determinant}, $M=N$ if and only if
   $\det(\iota)=u\in k^*.$
\end{proof}


\section{Saito criterion for several polynomials}
Let $f_1, \ldots,f_s \in R$ be $1<s\le n-1$ algebraically independent polynomials and  $S=V(f_1, \ldots,f_s)$. We denote by $I_S=(f_1,\ldots, f_s)$ the ideal defining $S$. The module of logarithmic derivations associated to {$I_S$, say  $\mathrm{Der}(I_S)$}, consists in derivations $\delta$  such that 
$\delta (g)\in I_S$ for all $g\in I_S$ {(see for instance \cite{Miranda-Neto})}. This is a rank $n$ module; one can verify easily that it contains $\mathrm{Der}(\prod_i f_i).$

\smallskip

In this section, we are interested in the module:
\[
\mathrm{Der}(f_1,\ldots, f_s)=\{\delta \in \mathrm{Der}(R)\simeq R^n \,|\, 
f_j\delta(f_i)=f_i\delta(f_j), \,\, 1\le i<j\le s\},
\]
 and its submodule
\[
\mathrm{Der}_0(f_1,\ldots, f_s)=\{\delta \in \mathrm{Der}(R)\simeq R^n \,|\, 
\delta(f_i)=0, \,\, 1\le i\le s\},
\]
whose associated sheaf was studied in \cite{faenzi-jardim-valles}; it can be regarded as a generalization of the $\mathrm{Der}_0$ module for a single polynomial, which was mentioned in Remark \ref{rmk:1.3}.
We remark first that this module depends on the choice of the polynomials generating $I_S$.
{Actually, $\mathrm{Der}(f_1,\ldots, f_s)$ is a submodule of $\mathrm{Der}(\prod_i f_i)$, strictly contained in it when $s\ge 2$ for reason of ranks as we will see in Theorem \ref{pluri-poly} below. When $\mathrm{deg}(f_1)\neq \mathrm{deg}(f_2)$ one can also verifies easily that $\delta_E \in  \mathrm{Der}(f_1f_2)$ but $\delta_E \notin \mathrm{Der}(f_1, f_2)$.}


\begin{thm}
\label{pluri-poly}
Let $f_1, \ldots,f_s \in R$ be $1<s\le n-1$ algebraically independent polynomials. Then, 
\begin{enumerate}
    \item $\mathrm{Der}(f_1,\ldots, f_s)$  is a reflexive module of rank $n-s+1.$
    \item Let $\delta_1, \ldots, \delta_{n+1-s}$ be independent derivations in $\mathrm{Der}(f_1,\ldots, f_s)$.
Then,
     \[
\mathrm{Der}(f_1,\ldots, f_s)=\bigoplus_{j=1,\ldots, n+1-s}R\delta_j \Leftrightarrow
\mathrm{codim}_{k^n} V\left( \bigwedge^{n+1-s}  [\delta_1, \ldots, \delta_{n+1-s}]\right)
       >1.
       \]
\end{enumerate}
\end{thm}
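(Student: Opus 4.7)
The plan is to deduce both parts from Proposition \ref{basic-determinant}, following the pattern of the proofs of Theorems \ref{saito-classic} and \ref{saito-multi}, with the twist that now $\Phi:\mathrm{Der}(f_1,\ldots,f_s)\hookrightarrow R^n$ has rank $n-s+1<n$. Consequently, the single equation ``$\det(\Phi)=f$'' of the classical Saito criterion is replaced by an ideal of $(n-s+1)$-minors, and I will control it via a codimension argument rather than by a direct factorization.

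For part (1), the defining conditions yield an exact sequence
\[
0\longrightarrow \mathrm{Der}(f_1,\ldots,f_s)\longrightarrow R^n\stackrel{\Psi}\longrightarrow R^{\binom{s}{2}},\qquad \Psi(\delta)=\bigl(f_j\delta(f_i)-f_i\delta(f_j)\bigr)_{i<j}.
\]
This exhibits $M:=\mathrm{Der}(f_1,\ldots,f_s)$ as the kernel of a map between free $R$-modules, hence as a second syzygy, and therefore reflexive over the regular ring $R$. For the rank I localize at the generic point: setting $g:=\delta(f_1)/f_1$, the conditions reduce to $\delta(f_j)=g\,f_j$ for $j=2,\ldots,s$, which are $s-1$ independent linear conditions on $\delta$ by algebraic independence of $f_1,\ldots,f_s$ (equivalently, by generic rank $s$ of the Jacobian). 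Hence $\mathrm{rk}\,M=n-s+1$.

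For part (2), set $N:=\bigoplus_{j}R\delta_j$ and let $\iota:N\hookrightarrow M$ be the inclusion. Proposition \ref{basic-determinant}, applied at rank $n-s+1$, yields
\[
\bigwedge^{n-s+1}[\delta_1,\ldots,\delta_{n-s+1}]=\bigwedge^{n-s+1}(\Phi\circ\iota)=\det(\iota)\cdot\bigwedge^{n-s+1}\Phi
\]
together with the equivalence $M=N\Leftrightarrow \det(\iota)\in k^*$. Since $\det(\iota)\in R=k[x_1,\ldots,x_n]$, the latter is equivalent to $\mathrm{codim}\,V(\det(\iota))>1$.

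The main remaining point, which I expect to be the only substantive obstacle, is the codimension bound $\mathrm{codim}\,V(\bigwedge^{n-s+1}\Phi)\ge 2$. I will deduce it from the fact that $R^n/M$ is torsion-free: if $t\delta\in M$ for some $0\ne t\in R$ and some $\delta\in R^n$, then $t\bigl(f_j\delta(f_i)-f_i\delta(f_j)\bigr)=0$ and, since $R$ is a domain, $f_j\delta(f_i)-f_i\delta(f_j)=0$, so $\delta\in M$. At each height-one prime $\mathfrak{p}$, this makes $(R^n/M)_\mathfrak{p}$ torsion-free, hence free over the DVR $R_\mathfrak{p}$, so $M_\mathfrak{p}\hookrightarrow R_\mathfrak{p}^n$ splits and the maximal-minor ideal of $\Phi$ is the unit ideal at $\mathfrak{p}$; its vanishing locus therefore lies in codimension $\ge 2$. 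Combined with the displayed identity, $V(\bigwedge^{n-s+1}(\Phi\circ\iota))=V(\det(\iota))\cup V(\bigwedge^{n-s+1}\Phi)$ has codimension $>1$ iff $V(\det(\iota))$ does, iff $\det(\iota)\in k^*$, iff $M=N$.
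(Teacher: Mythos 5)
Your proof is correct and follows essentially the same route as the paper: reflexivity and rank from presenting $\mathrm{Der}(f_1,\ldots,f_s)$ as the kernel of a map of free modules (with the rank read off from the generic rank of the Jacobian), the bound $\mathrm{codim}\,V(\bigwedge^{n+1-s}\Phi)>1$ deduced from torsion-freeness of the cokernel $R^n/M$, and Proposition~\ref{basic-determinant} to convert this into the stated equivalence. The only (harmless) differences are that you work with the full map to $R^{\binom{s}{2}}$ where the paper uses the smaller map to $R^{s-1}$, and that you spell out the localization-at-height-one-primes argument that the paper leaves implicit.
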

\begin{proof}

Let us consider the $R$-module map: 
$$ \mathrm{Der}(R)\simeq R^n \longrightarrow R^{s-1}, \,\, \mu \mapsto (f_i\mu (f_1)-f_1\mu (f_i))_{2\le i\le s}.$$
The image of this map, let us say $K$, being a sub-module of a free module, is torsion-free. Since the $f_i$ are algebraically independent this map is of maximal rank. Being a map between two free modules, its kernel is reflexive.
Moreover, the kernel of this map is the module
$\mathrm{Der}(f_1,\ldots , f_s)$ since it coincides with the kernel of the map 
$$ \mathrm{Der}(R)\simeq R^n \longrightarrow R^{\binom{s}{2}}, \,\, \mu \mapsto (f_i\mu (f_j)-f_j\mu (f_i))_{1\le i<j\le s}.$$ 
   This proves (1).

\medskip

Considering now the free sub-module $U=\oplus_{j=1,\cdots, n+1-s}R\delta_j$ 
of $\mathrm{Der}(f_1,\ldots , f_s)$ we obtain a  commutative diagram:
\begin{equation}
\begin{split} \xymatrix@-2ex{ 
&0 \ar[d] & & &\\
0\ar[r] & U\ar[rr]^{\Psi}\ar[d]^{\iota}&& R^n\ar[r]\ar@{=}[d] & K^{'} \ar[d]\ar[r] & 0 \\
0\ar[r] & \mathrm{Der}(f_1,\ldots , f_s)\ar[rr]^{\Phi}\ar[d] && R^n \ar[r] & K \ar[r] & 0 \\
& \mathrm{Der}(f_1,\ldots ,f_s)/U \ar[d]  &  & & \\
& 0 &  & &
} \end{split}
\end{equation}
 Since  $K$ is torsion-free this implies that $$\mathrm{codim}_{k^n} V\left(\bigwedge^{n+1-s} \Phi\right)>1.$$ Moreover, by Proposition \ref{basic-determinant},  $\bigwedge^{n-s+1} \Psi=\mathrm{det}(\iota)\bigwedge^{n-s+1} \Phi$.  So the alternative is: 
 \begin{itemize}
     \item $\mathrm{det}(\iota)\in k^*$ and $\mathrm{codim}_{k^n} V\left(\bigwedge^{n+1-s} \Psi\right) =\mathrm{codim}_{k^n} V\left(\bigwedge^{n+1-s} \Phi\right)>1$;
     \item $\mathrm{deg}(\mathrm{det}(\iota))>0$ and $\mathrm{codim}_{k^n} V\left(\bigwedge^{n+1-s} \Psi\right)=\mathrm{codim}_{k^n} V\left(\mathrm{det}(\iota))\right)=1.$
 \end{itemize} This proves  
 that 
 \[
 U=\mathrm{Der}(f_1,\ldots , f_s) \Leftrightarrow \mathrm{det}(\iota)\in k^* \Leftrightarrow \mathrm{codim}_{k^n} V\left(\bigwedge^{n+1-s} \Psi\right)>1.
 \]
 \end{proof}

\begin{eg}
    Let us give an example in $\mathbb{A}_3$. Let $f(x,y,z)=x^2-y^2$ and $g(x,y,z)=z^3+1.$
    The image of the map $$R^3 \longrightarrow R ~~, ~~ \mu \mapsto f\mu(g)-g\mu(f)$$
    is the ideal $(x(z^3+1), y(z^3+1), z^2(x^2-y^2))$.
    The derivations 
\[
\left\{
    \begin{array}{ccc}
    \delta_1&=&y\partial_x+x\partial_y\\
    \delta_2&=&xz^2\partial_x+yz^2\partial_y-(z^3-1)\partial_z
     \end{array}
       \right.
\]
belong to $\mathrm{Der}(f,g)$ (actually $\delta_i(f)=\delta_i(g)=0$). Since $\mathrm{codim}V(\bigwedge^2(\delta_1,\delta_2))>1$,
the module $\mathrm{Der}(f,g)$ is free, more precisely, 
$$ \mathrm{Der}_0(f,g)=\mathrm{Der}(f,g)\simeq R\delta_1\oplus R\delta_2. $$ 
\end{eg}

\begin{eg}
The previous example comes from the simple example in $\p^3$ given by the two equations:
$$ f=x_0^2-x_1^2, \; g=x_2^3+x_3^3.$$
 The rank 3 module $\mathrm{Der}(f,g)$ is the kernel of the graded map $R^4\rightarrow R[4]$ sending a derivation $\mu$ to the polynomial $f\mu(g)-g\mu(f).$ 
The derivations 
\[
\left\{
    \begin{array}{ccc}
    \delta_1&=&x_1\partial_{x_0}+x_0\partial_{x_1}\\
    \delta_2&=&x_3^2\partial_{x_2}-x_2^2\partial_{x_3}\\
    \delta_3&=& 3x_0\partial_{x_0}+3x_1\partial_{x_1}+2x_2\partial_{x_2}+2x_3\partial_{x_3}
     \end{array}
       \right.
\]
belong to $\mathrm{Der}(f,g)$. Since $\mathrm{codim}V(\bigwedge^3(\delta_1,\delta_2, \delta_3))>1$,
the module $\mathrm{Der}(f,g)$ is free, more precisely, 
$$\mathrm{Der}(f,g) = R\delta_1\oplus R\delta_2 \oplus R\delta_3=R[-1]\oplus R[-2]\oplus R[-1].$$
Observe, in addition, that $\delta_i(f)=\delta_i(g)=0$, for $i=1,2$; this means that
$$ \mathrm{Der}_0(f,g) = R\delta_1\oplus R\delta_2 = R[-1]\oplus R[-2]. $$
\end{eg}

\bibliographystyle{amsalpha}
\bibliography{basic_algebraic_geometry}

\end{document}